\newtheorem*{acknowledgements}{Acknowledgements}
\newtheorem*{theorem*}{Theorem}
\newtheorem{theorem}{Theorem}
\newtheorem{corollary}{Corollary}
\newtheorem{proposition}[corollary]{Proposition}
\theoremstyle{definition}
\newtheorem{definition}{Definition}
\newtheorem{example}{Example}
\newtheorem{remark}[example]{Remark}
\numberwithin{equation}{section}
\let\oldsqrt\sqrt
\def\sqrt{\mathpalette\DHLhksqrt}
\def\DHLhksqrt#1#2{%
\setbox0=\hbox{$#1\oldsqrt{#2\,}$}\dimen0=\ht0
\advance\dimen0-0.2\ht0
\setbox2=\hbox{\vrule height\ht0 depth -\dimen0}%
{\box0\lower0.4pt\box2}}
\DeclareFontFamily{U}{mathx}{\hyphenchar\font45}
\DeclareFontShape{U}{mathx}{m}{n}{
      <5> <6> <7> <8> <9> <10>
      <10.95> <12> <14.4> <17.28> <20.74> <24.88>
      mathx10
      }{}
\DeclareSymbolFont{mathx}{U}{mathx}{m}{n}
\DeclareMathAccent{\widecheck}{0}{mathx}{"71}
\newcommand\eps\varepsilon
\renewcommand\epsilon\varepsilon
\newcommand{\abs}[1]{\left\lvert #1 \right\rvert}
\newcommand{\smallabs}[1]{\lvert #1 \rvert}
\newcommand\floor[1]{\lfloor #1 \rfloor}
\newcommand{\norm}[1]{\lVert #1 \rVert}
\newcommand\inner[1]{\langle #1 \rangle}
\newcommand\sign{\operatorname{sign}}
\newcommand\Mand{\text{ and }}
\newcommand\paperintro%
\newcommand\paperbody%
\newcommand\bbG{\mathbb{G}}
\newcommand\bbH{\mathbb{H}}
\newcommand\bbR{\mathbb{R}}
\newcommand\bbS{\mathbb{S}}
\newcommand\bbZ{\mathbb{Z}}
\newcommand\cH{\mathcal{H}}
\newcommand\cI{\mathcal{I}}
\newcommand\cJ{\mathcal{J}}
\DeclareMathAlphabet{\mathpzc}{OT1}{pzc}{m}{it}
\newcommand{\sbs}{\subset}
\def\@tocline#1#2#3#4#5#6#7{\relax
  \ifnum #1>\c@tocdepth 
  \else
    \par \addpenalty\@secpenalty\addvspace{#2}%
    \begingroup \hyphenpenalty\@M
    \@ifempty{#4}{%
      \@tempdima\csname r@tocindent\number#1\endcsname\relax
    }{%
      \@tempdima#4\relax
    }%
    \parindent\z@ \leftskip#3\relax \advance\leftskip\@tempdima\relax
    \rightskip\@pnumwidth plus4em \parfillskip-\@pnumwidth
    #5\leavevmode\hskip-\@tempdima
      \ifcase #1
       \or\or \hskip 1em \or \hskip 2em \else \hskip 3em \fi%
      #6\nobreak\relax
    \hfill\hbox to\@pnumwidth{\@tocpagenum{#7}}\par
    \nobreak
    \endgroup
  \fi}
\def\annu#1{_{%
  \vbox{\hrule height .2pt 
    \kern 1pt 
    \hbox{$\scriptstyle {#1}\kern 1pt$}%
  }\kern-.05pt 
  \vrule width .2pt 
}}
\def\keywords{\xdef\@thefnmark{}\@footnotetext}
\title[Almost isoperimetric extremisers of two subriemannian probability measures]{Almost isoperimetric extremisers of two subriemannian probability measures}
\author{Yaozhong Qiu}
\address{UPL, Univ. Paris Nanterre, CNRS, F92000 Nanterre France}
\email{yqiu@parisnanterre.fr}
\begin{document}
\begin{abstract}
We prove the existence of almost isoperimetric extremisers for two classes of probability measures defined respectively on the Grushin space and a stratified Lie group. It turns out such extremisers can be regarded as a type of anisotropic half-space.  
\end{abstract}

\keywords{2020 \emph{Mathematics Subject Classification.} Primary 53C17, 49Q20}
\keywords{\emph{Keywords.} Isoperimetric inequality, extremisers, Grushin space, stratified Lie group, probability measure}

\maketitle

\section{Introduction and main results}
In this paper, we prove the existence of almost isoperimetric extremisers for two classes of probability measures defined respectively on the Grushin space and a stratified Lie group. In doing so, we continue the study of the isoperimetric problem for such measures initiated by the author in \cite{qiu2024optimal}. By \emph{almost}, we mean the following. 

\begin{definition}
    Let $(X, \mu, d)$ be a metric probability measure space. The perimeter of a Borel set $A \sbs X$ is the lower Minkowski content
    \begin{equation}\label{def-perimeter}
        \mu^+(A) = \liminf_{\epsilon \to 0^+} \frac{\mu(A_\epsilon) - \mu(A)}{\epsilon}
    \end{equation}
    where $A_\epsilon = \{x \in X \mid d(x, A) < \epsilon\}$, and the isoperimetric profile of $\mu$ is 
    \begin{equation}\label{def-isoperimetric-profile}
        \cI_\mu(t) = \inf\{\mu^+(A) \mid A \text{ is Borel and } \mu(A) = t \in (0, 1)\}, 
    \end{equation}
    so that $\cI_\mu$ is the optimal function satisfying the inequality 
    \[ \mu^+(A) \geq \cI_\mu(\mu(A)). \] 
    A family of sets $(A_t)$ of sufficiently small measure $0 < \mu(A_t) = t \leq t_0 < 1$ having the property there exists a constant $C > 1$ such that 
    \begin{equation}\label{def-almost-isoperimetric}
        C \geq \frac{\mu^+(A_t)}{\cI_\mu(\mu(A_t))} 
    \end{equation}
    for all $0 < t \leq t_0 < 1$ is called a \emph{family of almost isoperimetric extremisers}. 
\end{definition}

We will restrict our attention to the following two settings. We first consider the simpler case of the Grushin space $\bbR^{n+m} = \bbR^n_x \times \bbR^m_y$ equipped, for $\gamma \geq 0$, with its subgradient $\nabla_\gamma = (\nabla_x, \abs{x}^\gamma \nabla_y)$ and sublaplacian $\Delta_\gamma = \nabla_\gamma \cdot \nabla_\gamma$. The fundamental solution of $\Delta_\gamma$ is given by a power of  
\[ N_\gamma(x, y) = (\abs{x}^{2(1+\gamma)} + (1 + \gamma)^2\abs{y}^2)^{1/(2(1+\gamma)} \]
up to constants. It was shown in \cite[Corollary~6]{qiu2024optimal} that the probability measure 
\begin{equation}\label{def-grushin-measure}
    d\mu_{\gamma, \, p}(x, y) = Z^{-1}\exp(-N_\gamma^p)dxdy, 
\end{equation}
for $p \geq 1 + \gamma$ and $Z = Z_{\gamma, \, p}$ a normalisation constant, satisfies the isoperimetric inequality 
\begin{equation}\label{isoperimetric-grushin}
    \cI_{\mu_{\gamma, \, p}} \gtrsim \cJ_{p(1+\gamma)/(p\gamma + 1 + \gamma)}
\end{equation}
with respect to the Carnot-Carath\'eodory metric $d_\gamma$ induced by $\nabla_\gamma$ and where $\cJ_r$ is the isoperimetric profile of the measure $d\nu_r = Z^{-1}\exp(-\abs{x}^r)dx$ on $\bbR$ with respect to the euclidean metric which, according to \cite[Proposition~13.4]{bobkov1997some}, behaves asymptotically like 
\begin{equation}\label{isoperimetric-model-asymptotics}
    \cJ_r(t) = \cI_{\nu_r}(t) \simeq t \log(1/t)^{1-1/r}.
\end{equation}
It was established in \cite{qiu2024optimal} that \eqref{isoperimetric-grushin} is optimal for $\gamma \in \bbZ_{\geq 1}$ in the sense $p(1 + \gamma)/(p\gamma + 1 + \gamma)$ cannot be improved. For instance, if $\gamma = 1$ and $p = 4$, then $\mu_{\gamma, \, p} = \mu_{1, \, 4}$ has the supergaussian tails of $\nu_4$ with respect to $d_\gamma = d_1$ but the subgaussian isoperimetric profile of $\nu_{4/3}$. Moreover, since $p(1 + \gamma)/(p\gamma + 1 + \gamma) = 2p/(p + 2) < 2$ for each $p \geq 2$, we see $\mu_{1, \, p}$ never achieves the gaussian isoperimetric inequality.

This paper is motivated by the question of whether one can find a family of almost isoperimetric extremisers $(A_t)$ for $\mu_{\gamma, \, p}$ which would not only confirm the optimality of \eqref{isoperimetric-grushin}, but also shed some light on the geometry of the actual extremisers. It turns out that almost extremisers are a type of anisotropic half-space and so \eqref{isoperimetric-grushin} can be regarded as a generalisation of the gaussian isoperimetric inequality modulo constants. 

\begin{theorem}\label{thm-grushin}
    For the metric probability measure $(\bbR^n_x \times \bbR^m_y, \mu_{\gamma, \, p}, d_\gamma)$, $\gamma \geq 0$, and $p \geq 1 + \gamma$, sets of the form
    \[ A_K = \{(x, y) \in \bbR^n_x \times \bbR^m_y \mid (1 + \gamma)\abs{y} \geq \abs{x}^{1+\gamma} + K\} \]
    form a family of almost isoperimetric extremisers in the sense of \eqref{def-almost-isoperimetric} as $K \rightarrow \infty$. 
\end{theorem}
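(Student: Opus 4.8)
The plan is to pair the lower bound \eqref{isoperimetric-grushin} with a matching \emph{upper} estimate on the perimeter. Write $u(x,y) = (1+\gamma)\abs{y} - \abs{x}^{1+\gamma}$, so that $A_K = \{u \geq K\}$, and set $r = p(1+\gamma)/(p\gamma + 1 + \gamma)$. Since $\cI_{\mu_{\gamma,\,p}} \gtrsim \cJ_r$, it suffices to show
\[ \mu_{\gamma,\,p}^+(A_K) \lesssim \cJ_r(\mu_{\gamma,\,p}(A_K)) \qquad (K \to \infty), \]
because then $\mu_{\gamma,\,p}^+(A_K)/\cI_{\mu_{\gamma,\,p}}(\mu_{\gamma,\,p}(A_K)) \lesssim 1$, which is \eqref{def-almost-isoperimetric}; as $\mu_{\gamma,\,p}(A_K)$ decreases continuously to $0$, the family indexed by $K$ realises every sufficiently small measure $t \leq t_0$. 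A direct computation records, on $\{u=K\}$, the identities $\abs{\nabla_\gamma u} = \sqrt 2\,(1+\gamma)\abs{x}^\gamma$ and $\abs{\nabla u} = (1+\gamma)\sqrt{1 + \abs{x}^{2\gamma}}$, which drive both asymptotics below.

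First I would pin down the measure by Laplace's method. On $A_K$ the gauge $N_\gamma$ is minimised on the boundary at $x=0$, $(1+\gamma)\abs{y} = K$, where $N_\gamma = K^{1/(1+\gamma)}$. Writing $\rho = \abs{x}$ and $(1+\gamma)\abs{y} = \rho^{1+\gamma} + K + \tau$ with $\tau \geq 0$, one has on the governing region $N_\gamma^p = (\rho^{2(1+\gamma)} + ((1+\gamma)\abs{y})^2)^{p/(2(1+\gamma))} \approx K^{p/(1+\gamma)} + \tfrac{p}{1+\gamma}K^{p/(1+\gamma)-1}(\rho^{1+\gamma}+\tau)$, the quadratic-in-$\rho$ correction being negligible at the relevant scale $\rho^{1+\gamma} \sim K^{1 - p/(1+\gamma)}$. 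Integrating out $\tau$ and then performing the resulting Gamma-type integral in $\rho$ gives a polynomial prefactor times $e^{-K^{p/(1+\gamma)}}$; the only feature needed downstream is $\log(1/\mu_{\gamma,\,p}(A_K)) \sim K^{p/(1+\gamma)}$.

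For the perimeter I would identify the lower Minkowski content \eqref{def-perimeter} with respect to $d_\gamma$ with the weighted horizontal perimeter,
\[ \mu_{\gamma,\,p}^+(A_K) = Z^{-1}\int_{\{u=K\}} \frac{\abs{\nabla_\gamma u}}{\abs{\nabla u}}\, e^{-N_\gamma^p}\, d\cH^{n+m-1}. \]
The surface $\{u=K\} = \{\abs{y} = (1+\gamma)^{-1}(\abs{x}^{1+\gamma}+K)\}$ fibres over $\bbR^n_x$ with round spherical fibres in $\bbR^m_y$, so its area element factorises as $\sqrt{1 + \abs{x}^{2\gamma}}\,\bigl(\tfrac{\abs{x}^{1+\gamma}+K}{1+\gamma}\bigr)^{m-1}\,dx\,d\sigma_{S^{m-1}}$; the factor $\abs{\nabla_\gamma u}/\abs{\nabla u}$ cancels the $\sqrt{1+\abs{x}^{2\gamma}}$ and leaves the weight $\sqrt{2}\,\rho^\gamma$. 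Rerunning the one-dimensional Laplace analysis with this extra $\rho^\gamma$ inside the Gamma integral shifts the exponent of $K$ in the prefactor, and after dividing by the measure prefactor (which cancels) I would obtain
\[ \mu_{\gamma,\,p}^+(A_K) \asymp \mu_{\gamma,\,p}(A_K)\, K^{(p-1-\gamma)/(1+\gamma)^2}. \]

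Finally I would match the two sides. Since $1 - 1/r = (p-1-\gamma)/(p(1+\gamma))$, the model asymptotic \eqref{isoperimetric-model-asymptotics} gives, using $\log(1/\mu_{\gamma,\,p}(A_K)) \sim K^{p/(1+\gamma)}$,
\[ \cJ_r(\mu_{\gamma,\,p}(A_K)) \asymp \mu_{\gamma,\,p}(A_K)\,\bigl(\log(1/\mu_{\gamma,\,p}(A_K))\bigr)^{1-1/r} \asymp \mu_{\gamma,\,p}(A_K)\, K^{\frac{p}{1+\gamma}\cdot\frac{p-1-\gamma}{p(1+\gamma)}} = \mu_{\gamma,\,p}(A_K)\, K^{(p-1-\gamma)/(1+\gamma)^2}, \]
which is exactly the growth found for $\mu_{\gamma,\,p}^+(A_K)$; hence $\mu_{\gamma,\,p}^+(A_K) \asymp \cJ_r(\mu_{\gamma,\,p}(A_K))$ and the ratio stays bounded, proving the claim (the edge case $p = 1+\gamma$, where $r=1$ and all exponents vanish, is covered by the same computation). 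The hard part will be the perimeter step: justifying rigorously that the lower Minkowski content in the metric $d_\gamma$ is dominated by the weighted horizontal perimeter integral \emph{uniformly down to the characteristic locus} $\{x=0\}\cap\{u=K\}$, where $\abs{\nabla_\gamma u}$ degenerates and which is precisely where the mass concentrates; this is best handled by an outer containment $A_\epsilon \subseteq \{u > K - \sqrt{2}\,(1+\gamma)\abs{x}^\gamma\,\epsilon(1+o(1))\}$ quantifying the horizontal climbing cost, together with two-sided Laplace estimates that are uniform in $K$.
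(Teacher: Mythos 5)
Your proposal is correct and follows essentially the same route as the paper: both arguments compute explicit two\nobreakdash-sided asymptotics for the volume and perimeter of $A_K$ (your Laplace-method expansion yields the same exponents, $\mu_{\gamma,\,p}(A_K) \asymp K^{m-2\beta-(2\beta-1)n/\alpha}e^{-K^{2\beta}}$ with perimeter larger by the factor $K^{(2\beta-1)/\alpha} = K^{(p-1-\gamma)/(1+\gamma)^2}$, as the paper's changes of variables) and then match against \eqref{isoperimetric-model-asymptotics} using the known lower bound \eqref{isoperimetric-grushin}. The step you flag as ``the hard part'' --- identifying the lower Minkowski content with the weighted horizontal surface integral, uniformly across the characteristic locus --- is not actually a gap: it is precisely the representation formula \eqref{perimeter-grushin}, which holds for sets with Lipschitz boundary and which the paper quotes from \cite[Proposition~3.1]{monti2014isoperimetric}.
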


The proof of this result is via explicit computation and relies on the fact the perimeter $\mu_{\gamma, \, p}^+(A)$ of sets $A$ with Lipschitz boundary enjoy the representation formula
\begin{equation}\label{perimeter-grushin}
    \mu^+_{\gamma, p}(A) = \int_{\partial A} \sqrt{\abs{N_x}^2 + \abs{x}^{2\gamma}\abs{N_y}^2} \varphi_{\gamma, p}(x, y) d\cH^{n+m-1}(x, y)
\end{equation}
where $(N_x, N_y)$ is the unit euclidean normal, $\varphi_{\gamma, p}$ is the density of $\mu_{\gamma, \, p}$ with respect to Lebesgue measure, and $\cH^{n+m-1}$ is the $(n+m-1)$-dimensional Hausdorff measure, see for instance \cite[Proposition~3.1]{monti2014isoperimetric}.

We then consider the $3$-dimensional Heisenberg group $\bbH^1$ and prove the analogous result but with a different method since a formula for the perimeter is not known to us. It turns out that the proof extends to the setting of a step two stratified Lie group $\bbG$ which is defined here as $\bbR^n_x \times \bbR^m_z$ equipped with a group law of the form 
\begin{equation}\label{def-step-two-law} 
    (x, z) \circ (\xi, \zeta) = \left(x + \xi, z_1 + \zeta_1  + \frac{1}{2}\inner{B^{(1)}x, \xi}, \cdots, z_m + \zeta_m + \frac{1}{2}\inner{B^{(m)}x, \xi}\right)
\end{equation}
for a collection of $m$ linearly independent skew-symmetric matrices $B^{(1)}, \cdots, B^{(m)}$ of dimension $n \times n$. The group law gives rise to a family of canonical vector fields $X_1, \cdots, X_n$ which form the subgradient $\nabla_\bbG$ and the sublaplacian $\Delta_\bbG = \nabla_\bbG \cdot \nabla_\bbG$. Although the analogue 
\begin{equation}\label{def-kaplan-norm}
    N_\bbG(x, z) = (\abs{x}^4 + \abs{z}^2)^{1/4}
\end{equation}
of $N_\gamma$ is not in general the fundamental solution of $\Delta_\bbG$, except on a special class of groups called the $H$-type groups, see \cite[\S3.6~and~\S18]{bonfiglioli2007stratified}, this function is a homogeneous norm in the sense of \cite[\S5.1]{bonfiglioli2007stratified} which we call the Kaplan norm after \cite{kaplan1980fundamental}. It was shown in \cite[Corollary~5]{qiu2024optimal} that the analogous probability measure 
\begin{equation}\label{def-group-measure}
    d\mu_{\bbG, \, p} = Z^{-1}\exp(-N_{\bbG}^p)dxdz
\end{equation}  
for $p \geq 2$ and $Z = Z_p$ a normalisation constant, satisfies the isoperimetric inequality 
\begin{equation}\label{isoperimetric-group}
    \cI_{\mu_{\bbG, \, p}} \gtrsim \cJ_{2p/(p+2)},
\end{equation}
that is \eqref{isoperimetric-grushin} at $\gamma = 1$, and it turns out that the almost extremisers are exactly as before. 
\begin{theorem}\label{thm-group}
    For the metric probability measure $(\bbG \cong \bbR^n_x \times \bbR^m_z, \mu_{\bbG, \, p}, d_{\bbG})$, sets of the form
    \[ A_K = \{(x, z) \in \bbG \mid \abs{z} > \abs{x}^2 + K\} \]
    form a family of almost isoperimetric extremisers in the sense of \eqref{def-almost-isoperimetric} as $K \rightarrow \infty$. 
\end{theorem}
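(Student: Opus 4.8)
My plan is to keep the lower bound free and to work for a matching upper bound. By the isoperimetric inequality \eqref{isoperimetric-group} together with the model asymptotics \eqref{isoperimetric-model-asymptotics}, writing $t = \mu_{\bbG, \, p}(A_K)$ and noting $1 - (p+2)/(2p) = (p-2)/(2p)$, one has
\[ \mu^+_{\bbG, \, p}(A_K) \geq \cI_{\mu_{\bbG, \, p}}(t) \gtrsim \cJ_{2p/(p+2)}(t) \simeq t\log(1/t)^{(p-2)/(2p)}. \]
Everything then reduces to the matching upper bound $\mu^+_{\bbG, \, p}(A_K) \lesssim t\log(1/t)^{(p-2)/(2p)}$, after which the quotient in \eqref{def-almost-isoperimetric} stays bounded as $K \to \infty$, equivalently $t \to 0^+$. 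The whole estimate is a Laplace-type analysis concentrated near the locus $x = 0$ of $\partial A_K$, where the density $\varphi_{\bbG, \, p}$ attains its maximum on the boundary; the anisotropic dilations $\delta_\lambda(x, z) = (\lambda x, \lambda^2 z)$, under which $A_K = \delta_{\sqrt K}(A_1)$, explain why a single model computation serves for every $K$.

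Lacking a perimeter formula of the type \eqref{perimeter-grushin}, I would bound the lower Minkowski content from above by the horizontal surface integral
\[ \mu^+_{\bbG, \, p}(A_K) \leq S(K) := \int_{\partial A_K} \frac{\abs{\nabla_\bbG g}}{\abs{\nabla g}}\, \varphi_{\bbG, \, p}\, d\cH^{n+m-1}, \qquad g(x, z) = \abs{z} - \abs{x}^2, \]
where $\nabla_\bbG g = (X_1 g, \ldots, X_n g)$ is the horizontal gradient and $\nabla g$ the Euclidean one. To justify this I would argue directly with the Carnot-Carath\'eodory neighbourhoods: $g$ is Lipschitz for $d_\bbG$ with local constant $\abs{\nabla_\bbG g}$, and this rate is realised by flowing along $\nabla_\bbG g$, so a point on $\{g = K - s\}$ reaches $A_K = \{g > K\}$ within $d_\bbG$-distance comparable to $s/\abs{\nabla_\bbG g}$. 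Hence $(A_K)_\epsilon \setminus A_K \sbs \{K - \epsilon\abs{\nabla_\bbG g} \lesssim g < K\}$ up to lower-order terms, and the Euclidean coarea formula converts the measure of this shell, divided by $\epsilon$, into $S(K)$ as $\epsilon \to 0^+$. This is the main obstacle: one must control the $d_\bbG$-balls (for instance through the ball--box theorem) uniformly enough to validate the shell inclusion, and one must accommodate the degeneration $\abs{\nabla_\bbG g} \to 0$ as $x \to 0$, which is precisely the regime driving the asymptotics. Since only the inequality up to a fixed multiplicative constant is needed, the precision demanded is relatively mild.

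With this upper bound in hand I would evaluate $\mu_{\bbG, \, p}(A_K)$ and $S(K)$ as $K \to \infty$. On $\partial A_K$ one has $\abs{z} = \abs{x}^2 + K$, whence $N_\bbG^4 = 2\abs{x}^4 + 2K\abs{x}^2 + K^2$ and
\[ N_\bbG^p = K^{p/2} + \tfrac{p}{2} K^{p/2 - 1}\abs{x}^2 + O\!\left(K^{p/2 - 2}\abs{x}^4\right), \]
while $\abs{\nabla_\bbG g} = \abs{(-2I + \tfrac12 B_\omega)x}$ with $B_\omega = \sum_k \omega_k B^{(k)}$ and $\omega = z/\abs{z}$, so the metric factor $\abs{\nabla_\bbG g}/\abs{\nabla g}$ behaves like $\abs{x}$ near $x = 0$ since $\abs{\nabla g} = \sqrt{4\abs{x}^2 + 1} \to 1$. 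Parametrising by $x \in \bbR^n$, $\omega \in S^{m-1}$, $s \geq 0$ via $\abs{z} = \abs{x}^2 + K + s$ and freezing the surface and volume elements at $K^{m-1}\, dx\, d\omega$ near $x = 0$, the resulting Gaussian integrals (in $x$, and additionally in $s$ for the volume) yield
\[ \mu_{\bbG, \, p}(A_K) \asymp e^{-K^{p/2}} K^{\, m - 1 - (p-2)(n+2)/4}, \qquad S(K) \asymp e^{-K^{p/2}} K^{\, m - 1 - (p-2)(n+1)/4}, \]
the exponents differing by exactly $\tfrac14(p-2)$, the extra power contributed by the factor $\abs{\nabla_\bbG g} \asymp \abs{x}$ present in $S(K)$ but absent in the volume. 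Thus $S(K)/\mu_{\bbG, \, p}(A_K) \asymp K^{(p-2)/4}$.

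Finally I would assemble the pieces. Since $\log(1/t) = K^{p/2} + O(\log K) \sim K^{p/2}$ for $t = \mu_{\bbG, \, p}(A_K)$, the model profile obeys $\log(1/t)^{(p-2)/(2p)} \asymp (K^{p/2})^{(p-2)/(2p)} = K^{(p-2)/4}$, matching the growth of $S(K)/\mu_{\bbG, \, p}(A_K)$ found above. Combining with the surface bound gives
\[ \mu^+_{\bbG, \, p}(A_K) \leq S(K) \asymp t\, K^{(p-2)/4} \asymp t\log(1/t)^{(p-2)/(2p)} \asymp \cJ_{2p/(p+2)}(t) \lesssim \cI_{\mu_{\bbG, \, p}}(t), \]
so the quotient in \eqref{def-almost-isoperimetric} is bounded above by a constant independent of $K$ (and, by the first paragraph, bounded below as well). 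Reparametrising the family by $t = \mu_{\bbG, \, p}(A_K)$, which decreases continuously to $0$ as $K \to \infty$ and hence covers all sufficiently small values, exhibits $(A_K)$ as a family of almost isoperimetric extremisers.
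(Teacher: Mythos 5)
Your proposal is correct in substance and lands on exactly the right asymptotics, but it reaches the crucial perimeter upper bound by a genuinely different mechanism than the paper. The paper, having stated that no analogue of the representation formula \eqref{perimeter-grushin} is available on $\bbG$, never introduces your horizontal surface integral $S(K)$: it estimates the enlargement $A_{K,\,\epsilon}$ directly from the group law. Writing a point of $A_{K,\,\epsilon}$ as $(x+\delta_1, y+\delta_2, z+\delta_3)$ and using that $d_\bbG(\cdot,0)$ is a homogeneous norm, hence equivalent to the Kaplan norm $N_\bbG$, it derives the coordinate constraints \eqref{set-delta-conditions} and from them the explicit inclusion of Proposition \ref{prop1},
\[ A_{K,\,\epsilon} \sbs \{\abs{z} > x^2+y^2 - C_1\epsilon(\abs{x}+\abs{y}) + K - C_2\epsilon^2\}, \]
with skew-symmetry entering through $\inner{B^{(j)}x, x+\delta} = \inner{B^{(j)}x, \delta}$; the shell volume is then computed by the same changes of variable as in the Grushin section. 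Your route replaces this algebraic step by an intrinsic metric one: $g$ is $d_\bbG$-Lipschitz with local constant $\abs{\nabla_\bbG g} \asymp \abs{x}$ (skew-symmetry again, via $\abs{(-2I+\tfrac12 B_\omega)x}^2 = 4\abs{x}^2 + \tfrac14\abs{B_\omega x}^2$), and coarea converts the resulting shell into $S(K)$, which you then evaluate by Laplace approximation. Both roads produce the same integrals, and your exponents $m-1-(p-2)(n+1)/4$ and $m-1-(p-2)(n+2)/4$ agree with \eqref{perimeter-asymptotic-grushin} and \eqref{volume-asymptotic-group} at $\alpha=2$, $p=4\beta$. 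Your observation that the lower bound is free (the ratio in \eqref{def-almost-isoperimetric} is automatically $\geq 1$) also matches the paper's logic, which proves the second inclusion of Proposition \ref{prop1} only for completeness. What your approach buys is a cleaner structural parallel with the Grushin case and a reusable inequality (Minkowski content bounded by horizontal surface measure); what the paper's buys is that its key lemma is elementary and self-contained, needing only the equivalence of homogeneous norms.

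The one point you must nail down is the step you yourself flag as the main obstacle: the Lipschitz estimate controls $\abs{g(p)-g(q)}$ by $d_\bbG(p,q)$ times the supremum of $\abs{\nabla_\bbG g}$ along a connecting horizontal curve, not by its value at the endpoint, so the shell inclusion as stated needs a correction. The patch is short: horizontal coordinates are $1$-Lipschitz for $d_\bbG$ (project any horizontal curve to its $x$-component), so along a curve of length at most $2\epsilon$ one has $\abs{\nabla_\bbG g} \lesssim \abs{x} + \epsilon$, giving
\[ (A_K)_\epsilon \setminus A_K \sbs \{ K - C\epsilon\abs{x} - C\epsilon^2 < g \leq K \}, \]
which is precisely the form of the paper's first inclusion with its $-C_2\epsilon^2$ term. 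The $O(\epsilon^2)$ slack also absorbs the degeneration $\abs{\nabla_\bbG g} \to 0$ at $x=0$ that worried you, and it contributes nothing after dividing by $\epsilon$ and letting $\epsilon \to 0^+$. With that patch, and a routine dominated-convergence argument to pass from the shell volume to $S(K)$, your argument is complete.
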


To conclude this introduction, we remark in the simple setting of the Grushin plane where $n = m = 1$ and assuming $\gamma = 1$ that these almost extremisers can be taken as sets of the form $\{y > x^2 + K\}$ in contrast with the half-spaces $\{y > x + K\}$ extremising the gaussian isoperimetric inequality. We hope this provides new geometric insight into these probability measures, for instance with regards to the possibility of defining either an analogue of L\'evy's spherical isoperimetric inequality, in light of its connection with the gaussian isoperimetric inequality due to Borell, Sudakov, and Tsirel'son \cite{borell1975brunn, sudakov1978extremal}, or otherwise an analogue of the gaussian rearrangement developed by Ehrhard and Borell \cite{ehrhard1983symetrisation, borell2003ehrhard}. Finally, we refer the reader to \cite{ledoux2006isoperimetry} for a modern account of the gaussian isoperimetric inequality, \cite{bonfiglioli2007stratified} for a monograph on stratified Lie groups, and \cite{monti2004isoperimetric, monti2014isoperimetric, franceschi2016isoperimetric} for other related isoperimetric inequalities and problems. 
\section{The Grushin setting} 
Since we have already given the sets, all that remains is to compute their volume and perimeter. Let us first note 
\begin{equation}\label{exponential-asymptotic}
    \int_x^\infty e^{-\phi(y)}dy = \int_x^\infty \frac{d}{dy}\left(-\frac{1}{\varphi'(y)}e^{-\phi(y)}\right)dy\simeq \frac{e^{-\phi(x)}}{\phi'(x)} 
\end{equation}
for large $x$ and $\abs{x}^p$ for some $p > 0$ at infinity. Although we need only an upper bound on $\mu_{\gamma, \, p}(A_K)$ because $t\log(1/t)^{1-1/r}$ is increasing near $t = 0$ for each $r \geq 1$, we obtain two-sided asymptotics for completeness. To simplify notation in the sequel we write $\alpha = 1 + \gamma$ and $p = 2\beta(1 + \gamma) = 2\alpha\beta$ with $\beta \geq \frac{1}{2}$. 

\subsection{Estimates for the volume}
For the volume, passing to radial coordinates and changing variables, we have 
\begin{align}
    \mu_{\gamma, \, p}(A_K) &= Z^{-1} \int_{A_K} \exp(-\abs{x}^{2(1+\gamma)} + (1+\gamma)^2\abs{y}^2)^{p/(2(1+\gamma))}dydx \notag \\ 
    &\simeq \int_0^\infty \int_{\alpha^{-1}(x^\alpha + K)}^\infty x^{n-1}y^{m-1}e^{-(x^{2\alpha} + \alpha^2y^2)^\beta}dydx \notag \\
    &\simeq \int_0^\infty \int_{x^\alpha + K}^\infty x^{n-1}y^{m-1}e^{-(x^{2\alpha} + y^2)^\beta}dydx. \label{volume-start}
\end{align}
For the upper bound, we have $(x^{2\alpha} + y^2)^\beta \geq y^{2\beta}$ and therefore by \eqref{exponential-asymptotic} 
\begin{align}
    \mu_{\gamma, \, p}(A_K) &\lesssim \int_0^\infty \int_{x^\alpha + K}^\infty x^{n-1}y^{m-1}e^{-y^{2\beta}}dydx \vphantom{\int_0^\infty x^{n-1}\frac{(x^\alpha + K)^{m-1}e^{-(x^\alpha + K)^{2\beta}}}{(x^\alpha + K)^{2\beta-1} - (m-1)\log(x^\alpha + K)}dx} \notag \\
    &\simeq \int_0^\infty x^{n-1}\frac{(x^\alpha + K)^{m-1}e^{-(x^\alpha + K)^{2\beta}}}{(x^\alpha + K)^{2\beta-1} - (m-1)\log(x^\alpha + K)}dx \notag \\
    &\lesssim \int_0^\infty x^{n-1}(x^\alpha + K)^{m-2\beta} e^{-(x^\alpha + K)^{2\beta}}dx. \vphantom{\int_0^\infty x^{n-1}\frac{(x^\alpha + K)^{m-1}e^{-(x^\alpha + K)^{2\beta}}}{(x^\alpha + K)^{2\beta-1} - (m-1)\log(x^\alpha + K)}dx} \label{volume-intermediate-1}
\end{align}
To elucidate the argument, we factor out $K$ from both parentheses before making the first change of variable $z = K^{-1}x^\alpha$ to find
\begin{align*}
    \mu_{\gamma, \, p}(A_K) &\lesssim K^{m-2\beta} \int_0^\infty x^{n-1}\left(K^{-1}x^\alpha + 1\right)^{m-2\beta}e^{-K^{2\beta}(K^{-1}x^\alpha + 1)^{2\beta}}dx \\
    &= K^{m-2\beta} K^{1/\alpha} \int_0^\infty K^{(n-1)/\alpha} z^{n/\alpha-1}(z + 1)^{m-2\beta} e^{-K^{2\beta}(z + 1)^{2\beta}}dz
\end{align*}
and after the second change of variable $w = K^{2\beta}z$ we obtain
\begin{align*}
    \mu_{\gamma, \, p}(A_K) &\lesssim K^{m-2\beta+n/\alpha}K^{-2\beta} \int_0^\infty K^{-2\beta(n/\alpha-1)}w^{n/\alpha-1}(K^{-2\beta}w + 1)^{m-2\beta}e^{-K^{2\beta}(K^{-2\beta}w + 1)^{2\beta}}dw \\
    &= K^{m-2\beta-(2\beta-1)n/\alpha}\int_0^\infty w^{n/\alpha-1}(K^{-2\beta}w + 1)^{m-2\beta}e^{-K^{2\beta}(K^{-2\beta}w + 1)^{2\beta}dw}. 
\end{align*}
If $2\beta \geq 1$ were an integer then we could just extract the factor $\exp(-K^{2\beta})$ from the exponential and what remains is bounded above by a finite integral. To generalise to $2\beta$ noninteger, we apply Bernoulli's inequality to obtain $(1 + K^{-2\beta}w)^{2\beta} \geq 1 + 2\beta K^{-2\beta}w$ and argue similarly. Either way, we conclude the volume of $A_K$ enjoys the asymptotics
\begin{equation}\label{volume-asymptotic-grushin}
    \mu_{\gamma, \, p}(A_K) \lesssim K^{m-2\beta-(2\beta-1)n/\alpha}e^{-K^{2\beta}}.
\end{equation}
For the lower bound, returning to \eqref{volume-start}, by \eqref{exponential-asymptotic}
\begin{align}
    \mu_{\gamma, \, p}(A_K) &\gtrsim \int_0^\infty x^{n-1}\frac{(x^\alpha + K)^{m-1}e^{-(x^{2\alpha} + (x^\alpha + K)^2)^\beta}}{(x^\alpha + K)(x^{2\alpha} + (x^\alpha + K)^2)^{\beta-1} - (m-1)\log(x^\alpha + K)}dx \notag \\
    &\gtrsim \int_0^\infty x^{n-1}(x^\alpha + K)^{m-2\beta} e^{-(2x^\alpha + K)^{2\beta}}dx. \vphantom{\int_0^\infty x^{n-1}\frac{(x^\alpha + K)^{m-1}e^{-(x^{2\alpha} + (x^\alpha + K)^2)^\beta}}{(x^\alpha + K)(x^{2\alpha} + (x^\alpha + K)^2)^{\beta-1} - (m-1)\log(x^\alpha + K)}dx} \label{volume-intermediate-2}
\end{align}
Note we used \eqref{exponential-asymptotic} with a function $\phi = \phi_x$ which depends on $x$. It is readily checked that it continues to hold provided $e^{-\varphi_x(y)}/\varphi'_x(y) \rightarrow 0$ as $y \rightarrow \infty$. This brings us back to \eqref{volume-intermediate-1} with $2x^\alpha$ replacing $x^\alpha$ in the exponential and by the same arguments as before we recover the expected asymptotic \eqref{volume-asymptotic-grushin}, the only modification being that for $2\beta$ noninteger, we write $2\beta = 2\beta - \floor{2\beta} + \floor{2\beta}$, expand the integer part, and then apply Bernoulli's inequality (going in the opposite direction) for fractional exponents.

\subsection{Estimates for the perimeter}
For the perimeter, the formula \eqref{perimeter-grushin} together with the fact the (unnormalised) normal is 
\[ \nabla((1 + \gamma)\abs{y} - \abs{x}^{1 + \gamma}) = (-(1 + \gamma)\abs{x}^\gamma \nabla_x\abs{x}, (1 + \gamma)\nabla_y\abs{y}), \]
we have
\begin{align}
    \mu_{\gamma, \, p}^+(A_K) &= \int_{\partial A_K} \sqrt{\frac{\abs{x}^{2\gamma} + \abs{x}^{2\gamma}}{1 + \abs{x}^{2\gamma}}} \exp(-\abs{x}^{2(1+\gamma)} + (1+\gamma)^2\abs{y}^2)^{p/(2(1+\gamma))}d\cH^{n+m-1}(x, y) \notag \\
    &\simeq \int_0^\infty x^{n-1}(x^\alpha + K)^{m-1} \sqrt{\frac{2x^{2(\alpha-1)}}{1 + x^{2(\alpha-1)}}}e^{-(x^{2\alpha} + (x^\alpha + K)^2)^\beta}dx \vphantom{\int_{\partial A_K} \sqrt{\frac{\abs{x}^{2\gamma} + \abs{x}^{2\gamma}}{1 + \abs{x}^{2\gamma}}}e^{-(\abs{x}^{2(1+\gamma)} + (1+\gamma)^2\abs{y}^2)^{p/(2(1+\gamma))}}d\cH^{n+m-1}(x, y)} \notag \\
    &\simeq \int_0^\infty \frac{x^{n+\alpha-2}}{\sqrt{1 + x^{2(\alpha-1)}}}(x^\alpha + K)^{m-1}e^{-(2x^{2\alpha} + 2x^\alpha K + K^2)^\beta}dx. \vphantom{\int_{\partial A_K} \sqrt{\frac{\abs{x}^{2\gamma} + \abs{x}^{2\gamma}}{1 + \abs{x}^{2\gamma}}}e^{-(\abs{x}^{2(1+\gamma)} + (1+\gamma)^2\abs{y}^2)^{p/(2(1+\gamma))}}d\cH^{n+m-1}(x, y)} \label{perimeter-intermediate-1}
\end{align}
Observing $(x^\alpha + K)^2 \leq 2x^{2\alpha} + 2x^\alpha K + K^2 \leq (2x^\alpha + K)^2$ and following previous arguments, in particular since \eqref{perimeter-intermediate-1} has the same form (the square root excluded momentarily) as \eqref{volume-intermediate-1} and \eqref{volume-intermediate-2} but with $n+\alpha-1$ replacing $n$ and $m-1$ replacing $m-2\beta$, we conclude the perimeter of $A_K$ enjoys the asymptotics
\begin{equation}\label{perimeter-asymptotic-grushin} 
    \mu_{\gamma, \, p}^+(A_K) \simeq K^{m-1-(2\beta-1)(n+\alpha-1)/\alpha}e^{-K^{2\beta}} = K^{m-2\beta-(2\beta-1)n/\alpha+(2\beta-1)/\alpha}e^{-K^{2\beta}}. 
\end{equation}
Note the square root factor does not contribute to the leading asymptotics up to constants since after the two changes of variable, we see with $q = 2(\alpha-1)$ that $x^{2(\alpha-1)} = x^q$ is replaced by $K^{-(2\beta-1)q/\alpha}w^{q/\alpha}$. Since the exponent on $K$ is negative, the square root can be bounded for large $K$ above by $\sqrt{1 + w^{q/\alpha}}$ and below by $1$. Either way, it ultimately only contributes a constant to the asymptotics. 

The perimeter thus enjoys an improvement by a factor of $K^{(2\beta-1)/\alpha}$ over the volume. This is exactly the improvement predicted by the asymptotic \eqref{isoperimetric-model-asymptotics}; with the notation of this section, the isoperimetric inequality \eqref{isoperimetric-grushin} holds with $r = 2\alpha\beta/(1 + 2(\alpha-1)\beta)$ and so 
\[ \log(1/\mu_{\gamma, \, p}(A_K))^{1 - 1/r} \gtrsim K^{2\beta(1-1/r)} = K^{(2\beta - 1)/\alpha} \]
which gives
\[ \frac{\mu^+_{\gamma, \, p}(A_K)}{\mu_{\gamma, \, p}(A_K)\log(1/\mu_{\gamma, \, p}(A_K))^{1 - 1/r}} \lesssim 1 \]
as $K \rightarrow \infty$. 

\begin{remark}\label{remark-1}
    The normalisation $(1 + \gamma)\abs{y}$ defining $A_K$ is a matter of convenience and simplifying some computations; in particular the constant $1 + \gamma$ can be replaced by any other positive constant and Theorem \ref{thm-grushin} continues to hold. Moreover, the dependence of the estimates on the dimensions $n$ and $m$ can be removed. For instance, if $\gamma \in \bbZ_{\geq 1}$ and $\beta = 1$ then one can show, because $\mu_{\gamma, \, p} = \mu_{\gamma, \, 2(1 + \gamma)}$ is a gaussian in $y$, sets of the form 
    \[ \{\inner{v, y} > x_1^{1 + \gamma} + K\} \]
    for some $v \in \bbS^{m - 1}$ are almost isoperimetric extremisers enjoying volume and perimeter estimates independent of $n$ and $m$. In the general case $\gamma \geq 0$ and $\beta \geq \frac{1}{2}$ one can show sets of the form $\{y_1 > \abs{x_1}^{1 + \gamma} + K\}$ are almost extremisers satisfying dimension free estimates.
\end{remark}
\section{The step two setting}
As mentioned in the introduction, in the setting of the $3$-dimensional Heisenberg group $\bbH^1$, and more generally of a step two stratified Lie group $\bbG \cong \bbR^n_x \times \bbR^m_z$, we have no analogue of the Grushin perimeter formula \eqref{perimeter-grushin}. For the volume asymptotics however, note the proof in the previous chapter did not depend on the Grushin metric, and so we can recycle entirely the computations for $\gamma = 1$. Thus if we take $\alpha = 2$ in the Grushin volume asymptotics \eqref{volume-asymptotic-grushin}, we arrive at 
\begin{equation}\label{volume-asymptotic-group}
    \mu_{\bbG, \, p}(A_K) = Z^{-1}\int_{A_K} \exp(-(\abs{x}^4 + \abs{z}^2))^{p/4}dzdx \simeq K^{m-2\beta-n(2\beta-1)/2}e^{-K^{2\beta}} 
\end{equation}
for $p = 4\beta$ with $\beta \geq \frac{1}{2}$. 

For the perimeter asymptotics, which as it turns out are again the Grushin ones \eqref{perimeter-asymptotic-grushin}, we return to the original definition of perimeter \eqref{def-perimeter} and give explicit estimates for the $\epsilon$-enlargement $A_{K, \, \epsilon}$ of $A_K$. While there is no explicit formula for the distance between two points on the Heisenberg group (save for some special cases) let alone in general, what is known is that $d_{\bbG}(x, 0)$ is a homogeneous norm in the sense of \cite[\S5.1]{bonfiglioli2007stratified} with respect to the family of anisotropic dilations on $\bbG$ defined by $\delta_\lambda(x, z) \mapsto (\lambda x, \lambda^2z)$, $\lambda > 0$, and thus, since all homogeneous norms are mutually equivalent by \cite[Proposition~5.1.4]{bonfiglioli2007stratified}, it can be estimated in particular by the Kaplan norm \eqref{def-kaplan-norm}.

To provide some intuition for the structure of $A_{K, \, \epsilon}$, we first provide a sketch of the heuristics in the setting of the $3$-dimensional Heisenberg group $\bbH^1 \cong \bbR^2_{x, y} \times \bbR^1_z = (\bbR^1_x \times \bbR^1_y) \times \bbR^1_z$ equipped with the group law 
\[ (x_1, y_1, z_1) \circ (x_2, y_2, z_2) = \left(x_1 + y_1, x_2 + y_2, z_1 + z_2 + 2(x_1y_2 - x_2y_1)\right). \]
By \cite[Proposition~5.2.4]{bonfiglioli2007stratified}, the distance $d_{\bbH^1}(g, h)$ between two points $g, h \in \bbH^1$ is the distance between $g \circ h^{-1}$ and the identity element $e = 0$ where inversion on $\bbH^1$ is euclidean inversion $h \mapsto h^{-1} = -h$. Since $x \mapsto d_{\bbH^1}(x, 0)$ is a homogeneous norm by \cite[Theorem~5.2.8]{bonfiglioli2007stratified}, its aforementioned equivalence with the Kaplan norm implies 
\begin{align}
    d_{\bbH^1}((u, v, w), (x, y, z))^4 
    &= d_{\bbH^1}((u-x, v-y, w-z + 2(vx - uy)), 0)^4. \notag \\ 
    &\lesssim ((u - x)^2 + (v - y)^2)^2 + (w - z + 2(vx - uy))^2. \label{comparison}
\end{align}
If $d_{\bbH^1}((u, v, w), (x, y, z))$ is comparable to $\epsilon > 0$ and we write $u = x + \delta_1$, $v = y + \delta_2$, and $w = z + \delta_3$ for some $\delta_1, \delta_2, \delta_3 \in \bbR$, we see that \eqref{comparison} implies
\begin{equation}\label{set-delta-conditions}
    \abs{\delta_1}, \abs{\delta_2} \leq C_0\epsilon, \quad \abs{\delta_3 + 2(\delta_2x - \delta_1y)} \leq C_0\epsilon^2
\end{equation}
for some $C_0 > 0$. It turns out that the salient estimate is the former in the sense although the latter also imposes conditions on $\delta_1, \delta_2$, the worst case scenario of a point $(x, y, z)$ within $\epsilon$-distance to $(u, v, w) \in A_K$ is achieved when $\abs{\delta_1}, \abs{\delta_2} = C_0\epsilon$ and heuristically we expect if $\abs{z} > x^2 + y^2 + K$ then  
\[ \abs{w} > u^2 + v^2 - C\epsilon(\abs{u} + \abs{v}) + K + \text{something of lower order} \] 
for some $C > 0$. While this implies an inclusion for $A_{K, \, \epsilon}$ in the direction we need (since we need an upper bound on the perimeter which amounts to finding a small enough set containing $A_{K, \, \epsilon}$), we prove also an inclusion in the opposite direction for completeness. 

\begin{proposition}\label{prop1}
    For all $\epsilon > 0$ sufficiently small, there exist $C_1, C_2 > 0$ such that 
    \[ A_{K, \, \epsilon} \sbs \{\abs{z} > x^2 + y^2 - C_1\epsilon (\abs{x} + \abs{y}) + K - C_2\epsilon^2\}. \]
    and there exist $C_3 > 0$ such that 
    \[ \{x^2 + y^2 \leq 1\} \cap \{\abs{z} > x^2 + y^2 - C_3\epsilon(\abs{x} + \abs{y}) + K\} \sbs A_{K, \, \epsilon}. \]
\end{proposition}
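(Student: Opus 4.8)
The plan is to derive both inclusions directly from the Kaplan-norm comparison \eqref{comparison}, using that $d_{\bbH^1}$ and the Kaplan norm are equivalent \emph{in both directions}. Throughout I would write a candidate point of $A_K$ as $(u, v, w) = (x + \delta_1, y + \delta_2, z + \delta_3)$, so that after the cancellation $vx - uy = \delta_2 x - \delta_1 y$ the right-hand side of \eqref{comparison} becomes $(\delta_1^2 + \delta_2^2)^2 + (\delta_3 + 2(\delta_2 x - \delta_1 y))^2$, while the membership condition $\abs{w} > u^2 + v^2 + K$ expands to
\[ \abs{z + \delta_3} > x^2 + y^2 + 2(\delta_1 x + \delta_2 y) + \delta_1^2 + \delta_2^2 + K. \]

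For the first inclusion, suppose $(x, y, z) \in A_{K, \, \epsilon}$ and pick $(u, v, w) \in A_K$ with $d_{\bbH^1} < \epsilon$. The lower bound in the norm equivalence converts this into a bound on the Kaplan quantity, yielding $\abs{\delta_1}, \abs{\delta_2} \leq C_0 \epsilon$ and $\abs{\delta_3 + 2(\delta_2 x - \delta_1 y)} \leq C_0 \epsilon^2$ exactly as in \eqref{set-delta-conditions}. I would then bound $\abs{z} \geq \abs{w} - \abs{\delta_3}$ from below: membership gives $\abs{w} > u^2 + v^2 + K \geq x^2 + y^2 + 2(\delta_1 x + \delta_2 y) + K \geq x^2 + y^2 - 2C_0 \epsilon(\abs{x} + \abs{y}) + K$, where I discard $\delta_1^2 + \delta_2^2 \geq 0$, while the triangle inequality gives $\abs{\delta_3} \leq \abs{\delta_3 + 2(\delta_2 x - \delta_1 y)} + 2\abs{\delta_2 x - \delta_1 y} \leq C_0 \epsilon^2 + 2C_0 \epsilon(\abs{x} + \abs{y})$. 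Combining these yields the first inclusion with $C_1 = 4C_0$ and $C_2 = C_0$; note this half is global and needs no restriction on $(x,y)$.

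The second inclusion is the substantive one, and here the Heisenberg geometry is the main obstacle: a horizontal displacement $(\delta_1, \delta_2)$ of size $\epsilon$ generically forces the vertical term $\delta_3 + 2(\delta_2 x - \delta_1 y)$ to be of size $\epsilon$ rather than $\epsilon^2$, which would make $d_{\bbH^1}$ of order $\sqrt{\epsilon}$ and ruin the estimate. The key observation is that moving \emph{radially}, with $\delta_1 = -s\,x/\rho$ and $\delta_2 = -s\,y/\rho$ where $\rho = \sqrt{x^2 + y^2}$ and $s$ is a fixed multiple of $\epsilon$, kills the cross term identically, $\delta_2 x - \delta_1 y = 0$, because radial horizontal motion is an untwisted contact direction; it also gives the clean contraction $u^2 + v^2 = (\rho - s)^2 = \rho^2 - 2s\rho + s^2$. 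Given a point with $\abs{z} > x^2 + y^2 - C_3 \epsilon(\abs{x} + \abs{y}) + K$ (say $z > 0$, the other sign being symmetric), the reduction $2s\rho - s^2$ of $u^2 + v^2$ dominates the slack $C_3\epsilon(\abs{x}+\abs{y}) \leq \sqrt{2}\,C_3\epsilon\,\rho$ once $2s \geq \sqrt{2}\,C_3\epsilon$, leaving only an $O(\epsilon^2)$ deficit that a vertical correction $\delta_3 = O(\epsilon^2)$ absorbs to enforce membership. With the cross term gone, the Kaplan quantity collapses to $(s^2)^2 + \delta_3^2 = O(\epsilon^4)$, so the upper bound in the norm equivalence — precisely the direction \eqref{comparison} — gives $d_{\bbH^1} < \epsilon$ once the implicit constants are fixed, placing $(x, y, z)$ in $A_{K, \, \epsilon}$. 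The hypothesis $x^2 + y^2 \leq 1$ serves only to keep the lower-order remainders uniformly controlled and to sidestep the origin, where the radial direction degenerates (there the point already lies in $A_K$); it costs nothing in the subsequent perimeter estimate, where the measure is concentrated on bounded $(x, y)$.
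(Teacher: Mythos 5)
Your proposal is correct, and for the second (substantive) inclusion it takes a genuinely different route from the paper. The first inclusion is essentially the paper's computation: both expand $u^2+v^2$, discard $\delta_1^2+\delta_2^2\geq 0$, and feed in the two bounds of \eqref{set-delta-conditions}, landing on the same constants $C_1 = 4C_0$, $C_2 = C_0$; your only deviation is the reverse triangle inequality $\abs{z}\geq\abs{w}-\abs{\delta_3}$, which handles both signs of $w$ at once where the paper splits cases. For the second inclusion, the paper perturbs coordinate-wise, $\delta_1 = -\sign(x)C_0\chi_1\epsilon$, $\delta_2 = -\sign(y)C_0\chi_2\epsilon$, so the cross term $2(\delta_2 x - \delta_1 y)$ survives and must be cancelled by hand inside $\delta_3$; making the horizontal gain beat the slack $C_3\epsilon(\abs{x}+\abs{y})$ then forces a case analysis on which of $-\delta_2 x$, $\delta_1 y$ is positive together with the asymmetric weights $\chi_1 = \chi_2/2$. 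Your radial perturbation $(\delta_1,\delta_2) = -(s/\rho)(x,y)$, with $\rho = \sqrt{x^2+y^2}$, kills the cross term identically and gives the exact contraction $u^2+v^2 = (\rho-s)^2$, so membership collapses to the single inequality $(2s-\sqrt{2}C_3\epsilon)\rho + (\delta_3 - s^2)\geq 0$, valid for \emph{every} $\rho\geq 0$ once $2s\geq\sqrt{2}C_3\epsilon$ and $\delta_3\geq s^2$, with no case analysis; and since the Kaplan vertical component is then just $\delta_3$, the distance check is immediate from \eqref{comparison}. Besides being cleaner, this buys real generality: in a step-two group the cross terms are $\tfrac12\inner{B^{(j)}x,\delta}$ with $B^{(j)}$ skew-symmetric, so a radial $\delta$ parallel to $x$ annihilates all of them simultaneously, which would remove exactly the difficulty the paper flags in its closing remark (that generalising the second inclusion ``requires inductively defining the $\chi_i$'').

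Two small repairs. Your parenthetical that near the origin ``the point already lies in $A_K$'' is false: a point with $\rho^2 - C_3\epsilon(\abs{x}+\abs{y}) + K < \abs{z}\leq \rho^2 + K$ and $\rho$ small is in the hypothesis set but not in $A_K$. The claim is also unnecessary: the inequality above shows your vertical correction $\delta_3 = O(\epsilon^2)\geq s^2$ absorbs the deficit uniformly in $\rho$, including $\rho < s/2$ where the ``gain'' $2s\rho - s^2$ is negative, and at $\rho = 0$ exactly one takes no horizontal displacement (there the hypothesis reads $\abs{z}>K$ and $w = z+\delta_3$ works). In particular you never actually need $x^2+y^2\leq 1$, whereas the paper does use it (to get $z+\delta_3>0$ for large $K$). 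Finally, fix the constants in the right order to avoid circularity: the equivalence constant $\lambda$ between $d_{\bbH^1}$ and the Kaplan norm comes first, then $s = c\epsilon$ and $\delta_3 = c'\epsilon^2$ with $c^4 + c'^2 < \lambda^{-4}$ and $c'\geq c^2$, and only then $C_3\leq\sqrt{2}c$; in particular the constant $C_0$ of \eqref{set-delta-conditions}, which comes from the opposite direction of the equivalence, should not be reused here (a point on which the paper itself is loose).
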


\begin{proof}
    For the former inclusion, let $(x, y, z) \in A_{K, \, \epsilon}$, meaning there exists $(u, v, w) \in A_K$ satisfying $\abs{w} > u^2 + v^2 + K$ such that $d((u, v, w), (x, y, z)) < \epsilon$. We wish to show $\abs{z} > x^2 + y^2 - C_3\epsilon(\abs{x} + \abs{y}) + K - C_4\epsilon^2$. Assume for now $w > 0$. Let 
    \[ (u, v, w) = (x + \delta_1, y + \delta_2, z + \delta_3). \]
    Starting from \eqref{comparison}, we have
    \begin{align*}
        \abs{z} > z &> w + 2(vx - uy) - C_0\epsilon^2 \\
        &> u^2 + v^2 + 2(vx - uy) + K - C_0\epsilon^2 \\ 
        &= (x + \delta_1)^2 + (y + \delta_2)^2 + 2((y + \delta_2)x - (x + \delta_1)y) + K - C_0\epsilon^2 \\ 
        &= x^2 + y^2 + 2(\delta_1 + \delta_2)x + 2(-\delta_1 + \delta_2)y + K - C_0\epsilon^2 \\ 
        &\geq x^2 + y^2 - 4C_0\epsilon(\abs{x} + \abs{y}) + K - C_0\epsilon^2
    \end{align*}
    since \eqref{set-delta-conditions} implies $\abs{\delta_1}, \abs{\delta_2} \leq C_0\epsilon$. Thus the proposition holds with $C_1 = 4C_0$ and $C_2 = C_0$. The case of $w < 0$ is similar.  

    For the latter inclusion, let $(x, y, z)$ satisfy 
    \[ \abs{z} > x^2 + y^2 - C_3\epsilon(\abs{x} + \abs{y}) + K \]
    and $x^2 + y^2 \leq 1$. We wish to show there exists $(u, v, w) \in A_K$ satisfying $\abs{w} > u^2 + v^2 + K$ such that $d((u, v, w), (x, y, z)) < \epsilon$. Assume for now $z > 0$ and $0 < \epsilon \leq 1$. Let 
    \[ (u, v, w) = (x + \delta_1, y + \delta_2, z + \delta_3) \]
    with $\delta_1 = -\sign(x)C_0\chi_1\epsilon$, $\delta_2 = -\sign(y)C_0\chi_2\epsilon$, and $\delta_3 = C_0\epsilon^2 - 2(\delta_2x - \delta_1y)$, for some $0 < \chi_1, \chi_2 < 1$ to be determined. 

    Since $x^2 + y^2 \leq 1$ and $0 < \epsilon \leq 1$ implies that $z > x^2 + y^2 - C_3\epsilon(\abs{x} + \abs{y}) + K > K - 2C_3$ and $\abs{\delta_3} \leq C_0\epsilon^2 + \abs{2(\delta_2x - \delta_1y)} \leq 5C_0$, for $K$ sufficiently large it holds $z + \delta_3 > 0$ and thus 
     \begin{align*}
        \abs{w} &= z + \delta_3 \\
        &> x^2 + y^2 - 2(\delta_2x - \delta_1y) - C_3\epsilon(\abs{x} + \abs{y}) + K + C_0\epsilon^2 \\
        &= (u - \delta_1)^2 + (v - \delta_2)^2 - 2(\delta_2x - \delta_1y) - C_3\epsilon(\abs{x} + \abs{y}) + K + C_0\epsilon^2 \\
        &= u^2 + v^2 + 2(-\delta_1-\delta_2)x + 2(\delta_1 - \delta_2)y - \delta_1^2 - \delta_2^2 - C_3\epsilon(\abs{x} + \abs{y}) + K + C_0\epsilon^2
    \end{align*}
    Firstly, we choose $\chi_1, \chi_2$ such that $\delta_1^2 + \delta_2^2 = C_0^2(\chi_1^2 + \chi_2^2) < C_0$ and hence the constant term $C_0\epsilon^2 - \delta_1^2 - \delta_2^2$ is nonnegative, so for instance we may take $\chi_1, \chi_2 \leq \frac{1}{2}\sqrt{C_0}$. Secondly, we note one and exactly one of $-\delta_2x$ or $\delta_1y$ is positive. Supposing for the moment $-\delta_2x > 0$, in which case $\delta_1y = -C_0\chi_1\epsilon\abs{y} < 0$, then choosing $\chi_1 = \chi_2/2$ and $\chi_2 = \frac{1}{2}\sqrt{C_0}$, we obtain 
    \[ 2(\delta_1 - \delta_2)y = C_0\chi_2\epsilon\abs{y} \Mand 2(-\delta_1 - \delta_2)x = 3C_0\chi_2\epsilon\abs{x}. \]
    Thus the proposition holds with $C_3 = \frac{1}{2}C_0\chi_2 = \frac{1}{4}C_0^{3/2}$. The case of $-\delta_2x < 0$ is proved by interchanging $\chi_1$ and $\chi_2$. 
\end{proof}

We now prove the perimeter asymptotics. By the previous proposition, the difference in volume between $A_K$ and its $\epsilon$-enlargement $A_{K, \, \epsilon}$ is bounded below by 
\begin{align*}
    \mu_{\bbH^1, \, p}(A_{K, \, \epsilon} \setminus A) &\gtrsim \int_{0}^1 \int_{r^2 - C_1\epsilon r + K}^{r^2 + K} r^{2-1}z^{1-1}e^{-(r^4 + z^2)^\beta}dzdr \gtrsim \epsilon \int_0^1 r^{3-1}e^{-(r^4 + (r^2 + K)^2)^\beta}dr
\end{align*}
after passing to radial coordinates and since $\abs{x} + \abs{y}$ is comparable to $r = \sqrt{x^2 + y^2}$. Compared to \eqref{volume-intermediate-1}, this is almost the same integral in form except we have a finite region of integration in the $r$-variable. Since the changes of variable lead to $w = K^{2\beta}z = K^{2\beta-1}r^\alpha$, the upper terminal transforms into $K^{2\beta-1}$ which for $K > 1$ is bounded below by $1$, in particular away from zero, meaning we have the same asymptotics but with a different finite integral. Thus after dividing by $\epsilon$ and sending $\epsilon \rightarrow 0^+$, we obtain the lower bound 
\[ \mu_{\bbH^1, \, p}^+(A_K) \gtrsim K^{-3(2\beta-1)/2}e^{-K^{2\beta}} \] 
which is consistent with the Grushin asymptotics as claimed earlier by taking $n = 2$, $m = 1$, and $\alpha = 2$ in \eqref{perimeter-asymptotic-grushin}, and which improves the volume asymptotics \eqref{volume-asymptotic-group} by the expected factor $K^{(2\beta-1)/\alpha} = K^{(2\beta-1)/2}$. Similarly, $\mu_{\bbH^1, \, p}(A_{K, \, \epsilon} \setminus A)$ is bounded above by 
\begin{align*}
    \mu_{\bbH^1, \, p}(A_{K, \, \epsilon} \setminus A) &\lesssim \int_0^\infty \int_{r^2 - C_1\epsilon r + K - C_2\epsilon^2}^{r^2 + K} r^{2-1}z^{1-1}e^{-(r^4 + z^2)^\beta}dzdr \\
    &\lesssim \int_0^\infty (\epsilon r + \epsilon^2)r^{2-1}e^{-(r^4 + (r^2 - C_1\epsilon r + K - C_2\epsilon)^2)^\beta}dr.
\end{align*}
The $\epsilon^2$ term is of lower order while the $\epsilon r$ term contributes the same integral as before but taken over the entirety of $r \in \bbR_{\geq 0}$ and thus enjoys the same asymptotics. 

This completes the proof of the volume and perimeter asymptotics of $A_K$ in the setting of the $3$-dimensional Heisenberg group. To obtain the general result for an arbitrary step two stratified Lie group $\bbG \cong \bbR^n_x \times \bbR^m_z$, let us recall that the group law \eqref{def-step-two-law} as characterised by \cite[Theorem~3.2.2]{bonfiglioli2007stratified} takes on the form 
\[ (x, z) \circ (\xi, \zeta) = \left(x + \xi, z_1 + \zeta_1  + \frac{1}{2}\inner{B^{(1)}x, \xi}, \cdots, z_m + \zeta_m + \frac{1}{2}\inner{B^{(m)}x, \xi}\right) \]
for a collection of $m$ linearly independent skew-symmetric matrices $B^{(1)}, \cdots, B^{(m)}$ of dimension $n \times n$. The point is skew-symmetry of the matrices implies the worst case scenario loss in the $\epsilon$-enlargement happens once again in only the first order terms. In particular, the Heisenberg group law $2(vx - uy)$ in the previous proof is replaced by $\frac{1}{2}\inner{B^{(j)}x, \xi}$ and if $\xi = x + \delta$ for some $\delta \in \bbR^n$ such that $\norm{\delta}_\infty \lesssim \epsilon$, then
\[ \smallabs{\inner{B^{(j)}x, \xi}} = \smallabs{\inner{B^{(j)}x, \delta}} \lesssim \norm{\delta}_\infty \abs{x} \lesssim \epsilon \abs{x}. \]
The remainder of the argument goes in exactly the same way with $d_\bbG$ compared to $N_\bbG$ and the integral 
\[ \liminf_{\epsilon \to 0^+} \frac{1}{\epsilon} \int_0^\infty \int_{r^2-c_1\epsilon r + K - c_2\epsilon^2}^{r^2 + K} r^{n-1}z^{m-1}e^{-(r^4 + z^2)^\beta}dzdr \]
gives the expected asymptotics, that is improves \eqref{volume-asymptotic-group} by the factor $K^{(2\beta-1)/\alpha}$. 

\begin{remark}
    This argument only generalises the first inclusion of Proposition \ref{prop1}. The second inclusion is more involved and requires inductively defining the $\chi_i$.
\end{remark}
    
The same observations in Remark \ref{remark-1} can be made again here. In light of the structure of the almost extremisers, it is somewhat tempting to conjecture such sets are actually extremisers achieving the sharp isoperimetric inequality at least in the smooth setting of $\beta = 1$, for instance sets of the form $\{y_1 > x_1^{1+\gamma} + K\}$ on the Grushin space with $\gamma \in \bbZ_{\geq 1}$, and possibly assuming $n = 1$, or otherwise sets of the form $\{z_1 > x_1^2 + K\}$ on a step two group, and possibly isomorphic to $\bbH^1$.

\begin{acknowledgements}
\textup{We are grateful to Daniel R. Johnston for helpful discussions. We would also like to thank Nazarbayev University for their support and hospitality during the 15th ISAAC Congress where part of the work on this paper was undertaken. This project has received funding from the European Union’s Horizon 2020 research and innovation programme under the Marie Skłodowska-Curie grant agreement No 101034255. \scalerel*{\includegraphics{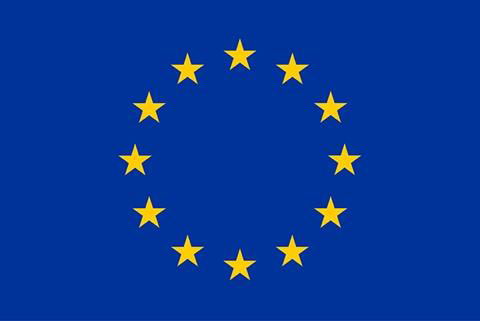}}{A}}
\end{acknowledgements}

\printbibliography

\end{document}